\numberwithin{equation}{section}
\theoremstyle{plain}
\newtheorem{theorem}{Theorem}
\newtheorem{example}[theorem]{Example}
\newtheorem{lemma}[theorem]{Lemma}
\newtheorem{remark}[theorem]{Remark}
\def\P{\mathbb P} \def\E{\mathbb E} 
\begin{document}

\title[Limit Theorems for Marked Hawkes Processes]{Limit Theorems for Marked Hawkes Processes
with Application to a Risk Model}

\author{DMYTRO KARABASH}
\address
{Courant Institute of Mathematical Sciences\newline
\indent New York University\newline
\indent 251 Mercer Street\newline
\indent New York, NY-10012\newline
\indent United States of America}
\email{karabash@cims.nyu.edu}

\author{LINGJIONG ZHU}
\address
{Courant Institute of Mathematical Sciences\newline
\indent New York University\newline
\indent 251 Mercer Street\newline
\indent New York, NY-10012\newline
\indent United States of America}
\email{ling@cims.nyu.edu}

\date{16 August 2014. \textit{Revised:} 25 February 2015}
\subjclass[2000]{60G55, 60F10, 60F05.}
\keywords{marked point processes, Hawkes processes, self-exciting
processes, large deviations, central limit theorem.}

\begin{abstract}
This paper focuses on limit theorems for linear Hawkes processes with random marks.
We prove a large deviation principle, which answers
the question raised by Bordenave and Torrisi. A central limit theorem
is also obtained. We conclude with an example of application in finance.
\end{abstract}

\maketitle

\section{Introduction and Main Results}

\subsection{Introduction}

Let $N$ be a simple point process on $\mathbb{R}$ and let $\mathcal{F}^{-\infty}_{t}:=\sigma(N(C),C\in\mathcal{B}(\mathbb{R}), C\subset(-\infty,t])$ be an increasing family of $\sigma$-algebras. Any nonnegative $\mathcal{F}^{-\infty}_{t}$-progressively measurable process $\lambda_{t}$ with
\begin{equation}
\mathbb{E}\left[N(a,b]|\mathcal{F}^{-\infty}_{a}\right]=\mathbb{E}\left[\int_{a}^{b}\lambda_{s}ds\big|\mathcal{F}^{-\infty}_{a}\right]
\end{equation}
a.s. for all intervals $(a,b]$ is called an $\mathcal{F}^{-\infty}_{t}$-intensity of $N$. We use the notation $N_{t}:=N(0,t]$ to denote the number of
points in the interval $(0,t]$. 

A linear Hawkes process with random marks is a simple point process with $\mathcal{F}_{t}^{\infty}$-predictable intensity
\begin{equation}
\lambda_{t}:=\nu+Z_{t},\quad Z_{t}:=\sum_{\tau_{i}<t}h(t-\tau_{i},a_{i}),\label{dynamicsmarked}
\end{equation}
where $\nu>0$, the $(\tau_{i})_{i\geq 1}$ are arrival times of the points, and the $(a_{i})_{i\geq 1}$ are i.i.d. random marks,
$a_{i}$ being independent of previous
arrival times $\tau_{j}$, $j\leq i$. We assume that $N(-\infty,0]=0$.
If one considers the stationary version of the process, one should start from time $-\infty$ in \eqref{dynamicsmarked}.

We further assume that $a_{i}$ has a common distribution $q(da)$ on a measurable space $\mathbb{X}$.
Here, $h(\cdot,\cdot):\mathbb{R}^{+}\times\mathbb{X}\rightarrow\mathbb{R}^{+}$ is integrable,
i.e. $\int_{0}^{\infty}\int_{\mathbb{X}}h(t,a)q(da)dt<\infty$. 
Let $H(a):=\int_{0}^{\infty}h(t,a)dt$ for any $a\in\mathbb{X}$. We also assume that
\begin{equation}
\int_{\mathbb{X}}H(a)q(da)<1.\label{lessthanone}
\end{equation}
Let $\mathbb{P}^{q}$ denote the probability measure for the $a_{i}$'s with the common law $q(da)$. 
Under assumption \eqref{lessthanone}, it is well known that there exists a unique stationary version of the linear
marked Hawkes process satisfying the dynamics \eqref{dynamicsmarked} and that by ergodic theorem, a law
of large numbers holds, 
\begin{equation}
\lim_{t\rightarrow\infty}\frac{N_{t}}{t}=\frac{\nu}{1-\mathbb{E}^{q}[H(a)]}.
\end{equation}
This paper is organized as the following. In Section \ref{LimitThmsUnmarked}, we will
review some results about the limit theorems for unmarked Hawkes processes. In Section \ref{MainResults},
we will introduce the main results of this paper, i.e. the central limit theorem and the large deviation principle
for linear marked Hawkes processes. The proof of the central limit theorem will be given
in Section \ref{CLTProof} and the proof of the large deviation principle will be given in
Section \ref{LDPProof}. Finally, we will discuss an application of our results to a risk model in finance
in Section \ref{RiskModel}.

\subsection{Limit Theorems for Unmarked Hawkes Processes}\label{LimitThmsUnmarked}

Most of the literature about Hawkes processes considered the unmarked case, i.e. with intensity
\begin{equation}
\lambda_{t}:=\lambda\left(\sum_{\tau<t}h(t-\tau)\right),
\end{equation}
where $h(\cdot):\mathbb{R}^{+}\rightarrow\mathbb{R}^{+}$ is integrable and $\Vert h\Vert_{L^{1}}<1$
and $\lambda(\cdot):\mathbb{R}^{+}\rightarrow\mathbb{R}^{+}$ is locally integrable.

When $\lambda(\cdot)$ is linear, the Hawkes process is said to be linear and it is nonlinear otherwise.
The stability results for both linear and nonlinear Hawkes processes are known. For the linear
case, we refer to Hawkes and Oakes \cite{HawkesII}. For the nonlinear case,
Br\'{e}maud and Massouli\'{e} \cite{Bremaud} proved the stability results for $\alpha$-Lipschitz $\lambda(\cdot)$
such that $\alpha\Vert h\Vert_{L^{1}}<1$. Karabash \cite{Karabash} obtained stability results
for certain non-Lipschitz $\lambda(\cdot)$ and discontinuous $\lambda(\cdot)$.

The limit theorems for both linear and nonlinear Hawkes processes are well studied in the literature.

For the linear Hawkes process, assume $\lambda(z)=\nu+z$, for some $\nu>0$ and $\Vert h\Vert_{L^{1}}<1$, 
it has an immigration-birth representation, see
for example Hawkes and Oakes \cite{HawkesII}. For linear Hawkes process, limit theorems are very well understood. There is the law of large numbers 
(see for instance Daley and Vere-Jones \cite{Daley}), i.e.
\begin{equation}
\frac{N_{t}}{t}\rightarrow\frac{\nu}{1-\Vert h\Vert_{L^{1}}},\quad\text{as $t\rightarrow\infty$ a.s.}
\end{equation}
Moreover, Bordenave and Torrisi \cite{Bordenave} proved a large deviation principle for $(\frac{N_{t}}{t}\in\cdot)$ with the rate function
\begin{equation}\label{LinearLDP}
I(x)=
\begin{cases}
x\log\left(\frac{x}{\nu+x\Vert h\Vert_{L^{1}}}\right)-x+x\Vert h\Vert_{L^{1}}+\nu &\text{if $x\in[0,\infty)$}
\\
+\infty &\text{otherwise}
\end{cases}.
\end{equation}
By applying the techniques of large deviations, the asymptotics of the ruin probabilities for risk processes in insurance 
were studied in Stabile and Torrisi \cite{Stabile} for the light-tailed claims and in Zhu \cite{ZhuVI} for the heavy-tailed claims.

The limit theorems have also been studied for an extension of linear Hawkes processes and Cox-Ingersoll-Ross processes in Zhu \cite{ZhuV},
which has applications in short interest rate models in finance.

Recently, Bacry et al. \cite{Bacry} proved a functional central limit theorem for the linear multivariate Hawkes process 
under certain assumptions which includes the linear Hawkes process as a special case and they proved that
\begin{equation}\label{LinearCLT}
\frac{N_{\cdot t}-\cdot\mu t}{\sqrt{t}}\rightarrow\sigma B(\cdot),\quad\text{as $t\rightarrow\infty$,}
\end{equation}
weakly on $D[0,1]$ equipped with Skorokhod topology, where
\begin{equation}
\mu=\frac{\nu}{1-\Vert h\Vert_{L^{1}}}\quad\text{and}\quad\sigma^{2}=\frac{\nu}{(1-\Vert h\Vert_{L^{1}})^{3}}.
\end{equation}
Moderate deviation principle for linear Hawkes processes is obtained in Zhu \cite{ZhuIV},
which fills in the gap between central limit theorem and large deviation principle.

For nonlinear Hawkes processes,
a central limit theorem is obtained in Zhu \cite{ZhuIII}.
In Bordenave and Torrisi \cite{Bordenave}, they raised two questions about large deviations for Hawkes processes.
One question is about large deviations for nonlinear Hawkes process and the other is about large deviations
for linear marked Hawkes processes. Recently, Zhu \cite{ZhuI} considered a special case for nonlinear Hawkes processes
when $h(\cdot)$ is exponential or sums of exponentials and proved the large deviations. 
In another paper, Zhu \cite{ZhuII} proved a process-level, i.e. level-3 large deviation principle for
nonlinear Hawkes processes for general $h(\cdot)$ and hence by contraction principle, the level-1 large deviation
principle for $(N_{t}/t\in\cdot)$. In this paper, we will prove the large deviations for linear
marked Hawkes processes and thus both questions raised in Bordenave and Torrisi \cite{Bordenave} have been
answered. The large deviation theory studies the small probability of rare events. Unlike the unmarked case, the rare
events in marked Hawkes processes can also be due to the presence of random marks. It is mixture of atypical behavior
of unmarked Hawkes processes and atypical behavior of random marks. The role of random marks in the large deviations
of Hawkes processes is what we need to understand. 

\subsection{Main Results}\label{MainResults}

Before we proceed, recall that a sequence $(P_{n})_{n\in\mathbb{N}}$ of probability measures on a topological space $X$ 
satisfies the large deviation principle (LDP) with rate function $I:X\rightarrow\mathbb{R}$ if $I$ is non-negative, 
lower semicontinuous and for any measurable set $A$, we have
\begin{equation}
-\inf_{x\in A^{o}}I(x)\leq\liminf_{n\rightarrow\infty}\frac{1}{n}\log P_{n}(A)
\leq\limsup_{n\rightarrow\infty}\frac{1}{n}\log P_{n}(A)\leq-\inf_{x\in\overline{A}}I(x).
\end{equation}
Here, $A^{o}$ is the interior of $A$ and $\overline{A}$ is its closure. 
See Dembo and Zeitouni \cite{Dembo} or Varadhan \cite{VaradhanII} for general background regarding large deviations and
their applications. 
Also Varadhan \cite{Varadhan} has an excellent survey article on this subject.

For a linear marked Hawkes process satisfying
the dynamics \eqref{dynamicsmarked}, we prove the following large deviation principle in this article.

\begin{theorem}[Large Deviation Principle]\label{LDP}
Assume the conditions \eqref{lessthanone} and $H(a)>0$ with positive probability.
Also assume that there exists some $\theta>0$, so that $\int_{\mathbb{X}}e^{\theta H(a)}q(da)<\infty$.
Then, 
$\mathbb{P}(N_{t}/t\in\cdot)$ satisfies a large deviation principle with rate function,
\begin{align}
\Lambda(x)
&:=
\begin{cases}
\inf_{\hat{q}}\left\{x\mathbb{E}^{\hat{q}}[H(a)]+\nu-x+x\log\left(\frac{x}{x\mathbb{E}^{\hat{q}}[H(a)]+\nu}\right)
+x\mathbb{E}^{\hat{q}}\left[\log\frac{d\hat{q}}{dq}\right]\right\} &\text{$x\geq 0$}
\\
+\infty &\text{$x<0$}
\end{cases}\nonumber
\\
&=
\begin{cases}
\theta_{\ast}x-\nu(x_{\ast}-1) &\text{$x\geq 0$}
\\
+\infty &\text{$x<0$}
\end{cases},\nonumber
\end{align}
where the infimum of $\hat{q}$
is taken over $\mathcal{M}(\mathbb{X})$,
the space of probability measures on $\mathbb{X}$
such that $\hat{q}$ is absolutely continuous w.r.t. $q$. 
Here, $\theta_{\ast}$ and $x_{\ast}$
satisfy the following equations
\begin{equation}
\begin{cases}
x_{\ast}=\mathbb{E}^{q}\left[e^{\theta_{\ast}+(x_{\ast}-1)H(a)}\right]
\\
\frac{x}{\nu}=x_{\ast}+\frac{x}{\nu}\mathbb{E}^{q}\left[H(a)e^{\theta_{\ast}+(x_{\ast}-1)H(a)}\right]
\end{cases}.
\end{equation}
\end{theorem}

\begin{theorem}[Central Limit Theorem]\label{CLT}
Assume $\lim_{t\rightarrow\infty}t^{1/2}\int_{t}^{\infty}\mathbb{E}^{q}[h(s,a)]ds=0$ and
that \eqref{lessthanone} holds. Then, 
\begin{equation}
\frac{N_{t}-\frac{\nu t}{1-\mathbb{E}^{q}[H(a)]}}{\sqrt{t}}
\rightarrow N\left(0,\frac{\nu(1+\text{Var}^{q}[H(a)])}{(1-\mathbb{E}^{q}[H(a)])^{3}}\right),
\end{equation}
in distribution as $t\rightarrow\infty$.
\end{theorem}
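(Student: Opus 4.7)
The plan is to exploit the Poisson cluster (immigration--birth) representation of the linear marked Hawkes process, extended in the obvious way from Hawkes and Oakes \cite{HawkesII} to i.i.d.\ marks. Under \eqref{lessthanone}, $N_{t}$ is realised as follows: immigrants arrive as a Poisson process of rate $\nu$, each carries an i.i.d.\ mark $a\sim q$, and, conditional on $a$, the immigrant generates a Poisson process of direct offspring with intensity $h(\cdot,a)$; each offspring independently acquires a fresh mark and branches in the same way. With $m:=\mathbb{E}^{q}[H(a)]<1$ the underlying Galton--Watson tree is subcritical, so the total progeny $C$ of a single cluster is a.s.\ finite. The classical total-progeny formulae give $\mathbb{E}[C]=1/(1-m)$ and $\operatorname{Var}(C)=\sigma_{K}^{2}/(1-m)^{3}$, where $\sigma_{K}^{2}$ is the variance of the offspring count of a single individual. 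Because, conditional on mark $a$, that count is $\mathrm{Poisson}(H(a))$, the law of total variance yields $\sigma_{K}^{2}=\mathbb{E}^{q}[H(a)]+\operatorname{Var}^{q}[H(a)]$; a short calculation then produces the key identity
\begin{equation*}
\nu\,\mathbb{E}[C^{2}]=\frac{\nu\bigl(1+\operatorname{Var}^{q}[H(a)]\bigr)}{(1-m)^{3}},
\end{equation*}
which is exactly the target asymptotic variance.

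Next, I would introduce the auxiliary count $\widetilde{N}_{t}$, defined as the total number of descendants (of all generations) of all immigrants arriving in $[0,t]$, without truncating the descendants at time $t$. Because $N(-\infty,0]=0$, every point counted by $N_{t}$ belongs to such a cluster, so $N_{t}=\widetilde{N}_{t}-E_{t}$ with $E_{t}\geq 0$ the number of descendants whose times exceed $t$. Crucially, $\widetilde{N}_{t}$ is a compound Poisson random variable,
\begin{equation*}
\widetilde{N}_{t}=\sum_{i=1}^{K_{t}}C_{i},\qquad K_{t}\sim\mathrm{Poisson}(\nu t),
\end{equation*}
with $(C_{i})$ i.i.d.\ copies of $C$, independent of $K_{t}$. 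The classical CLT for compound Poisson sums immediately gives
\begin{equation*}
\frac{\widetilde{N}_{t}-\nu t/(1-m)}{\sqrt{t}}\to N\!\left(0,\frac{\nu(1+\operatorname{Var}^{q}[H(a)])}{(1-m)^{3}}\right).
\end{equation*}

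It then remains to show $E_{t}/\sqrt{t}\to 0$ in probability. Set $\widetilde{h}(s):=\mathbb{E}^{q}[h(s,a)]$ and $T(s):=\int_{s}^{\infty}\widetilde{h}(u)\,du$, and let $\bar{g}=\sum_{k\geq 1}\widetilde{h}^{\ast k}$ be the mean descendant intensity of a single root (averaged over its mark), which satisfies the renewal equation $\bar{g}=\widetilde{h}+\widetilde{h}\ast\bar{g}$. Writing $\bar{G}(u):=\int_{u}^{\infty}\bar{g}$, a Fubini argument gives $\mathbb{E}[E_{t}]=\nu\int_{0}^{t}\bar{G}(u)\,du$, and manipulating the renewal equation produces the bound $\int_{0}^{t}\bar{G}(u)\,du\leq(1-m)^{-2}\int_{0}^{t}T(u)\,du$. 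The hypothesis $\sqrt{t}\,T(t)\to 0$ forces $T(s)=o(s^{-1/2})$; splitting the integral at an arbitrary large constant then yields $\int_{0}^{t}T(u)\,du=o(\sqrt{t})$. Markov's inequality gives $E_{t}/\sqrt{t}\to 0$ in probability, and Slutsky's theorem completes the proof.

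The main obstacle is this last step: the tail hypothesis on $\widetilde{h}$ must be propagated through all generations of the branching structure so as to bound the edge error at the delicate scale $\sqrt{t}$. Without the renewal-equation identity relating $\bar{G}$ to $T$, a naive estimate on $\mathbb{E}[E_{t}]$ is only $O(t)$; it is precisely the form of the assumed decay $\sqrt{t}\,T(t)\to 0$ that upgrades this to $o(\sqrt{t})$ and matches exactly what is needed to conclude via Slutsky.
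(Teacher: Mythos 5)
Your proof is correct, but it takes a genuinely different route from the paper's. The paper works directly with the point process: it writes $\int_{0}^{t}\lambda_{s}\,ds=\nu t+\sum_{\tau_{i}<t}H(a_{i})-\mathcal{E}_{t}$ with $\mathcal{E}_{t}=\sum_{\tau_{i}<t}\int_{t}^{\infty}h(s-\tau_{i},a_{i})\,ds$, decomposes $N_{t}-\mu t$ into the two martingales $N_{t}-\int_{0}^{t}\lambda_{s}\,ds$ and $\sum_{\tau_{i}<t}(H(a_{i})-\mathbb{E}^{q}[H(a)])$ plus this boundary term, computes the quadratic variation of the martingale sum via the law of large numbers for $N_{t}/t$, and invokes the martingale central limit theorem. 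You instead use the immigration--birth representation and the classical CLT for compound Poisson sums, which is legitimate here because the clusters attached to distinct immigrants are i.i.d.\ and independent of the Poisson immigrant count; your variance identity $\nu\mathbb{E}[C^{2}]=\nu(1+\mathrm{Var}^{q}[H(a)])/(1-m)^{3}$ is correct (the mixed-Poisson offspring law has variance $m+\mathrm{Var}^{q}[H(a)]$ by the law of total variance, and the total-progeny moment formulae do the rest), and your renewal-equation bound $\int_{0}^{t}\bar{G}(u)\,du\le(1-m)^{-2}\int_{0}^{t}T(u)\,du$ checks out, with the right constant. The trade-off is visible in the error terms: your $E_{t}$ counts actual descendants born after $t$ across all generations, so you must propagate the tail hypothesis through the whole branching structure (hence the renewal step), whereas the paper's $\mathcal{E}_{t}$ involves only the first-generation intensity mass and needs only the one-step estimate $\mathbb{E}[\mathcal{E}_{t}]\le\frac{\nu}{1-m}\int_{0}^{t}\mathbb{E}^{q}[g(s,a)]\,ds=o(\sqrt{t})$; on the other hand, your route is more elementary and self-contained for the linear case, produces the limiting variance directly from branching-process moments rather than from a quadratic-variation computation, and is the natural companion to the cluster argument the paper itself uses for the large deviation upper bound, while the martingale route generalizes more readily (e.g.\ to functional or nonlinear versions). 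Both proofs implicitly require $\mathbb{E}^{q}[H(a)^{2}]<\infty$, which neither the theorem statement nor your write-up makes explicit.
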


\begin{remark}
Comparing Theorem \ref{LDP} and Theorem \ref{CLT} with \eqref{LinearLDP} and \eqref{LinearCLT},
it is easy to see that our results
are consistent with the limit theorems for unmarked Hawkes process.
\end{remark}

\section{Proof of Central Limit Theorem}\label{CLTProof}

\begin{proof}[Proof of Theorem \ref{CLT}]
First, let us observe that
\begin{align}
\int_{0}^{t}\lambda_{s}ds
&=\nu t+\sum_{\tau_{i}<t}\int_{\tau_{i}}^{t}h(s-\tau_{i},a_{i})ds
\\
&=\nu t+\sum_{\tau_{i}<t}H(a_{i})-\mathcal{E}_{t},\nonumber
\end{align}
where the error term $\mathcal{E}_{t}$ is given by
\begin{equation}
\mathcal{E}_{t}:=\sum_{\tau_{i}<t}\int_{t}^{\infty}h(s-\tau_{i},a_{i})ds.
\end{equation}
Therefore,
\begin{align}
\frac{N_{t}-\int_{0}^{t}\lambda_{s}ds}{\sqrt{t}}
&=\frac{N_{t}-\nu t-\sum_{\tau_{i}<t}H(a_{i})}{\sqrt{t}}+\frac{\mathcal{E}_{t}}{\sqrt{t}}\label{rearrange}
\\
&=(1-\mathbb{E}^{q}[H(a)])\frac{N_{t}-\mu t}{\sqrt{t}}+\frac{\mathbb{E}^{q}[H(a)]N_{t}-\sum_{\tau_{i}<t}H(a_{i})}{\sqrt{t}}
+\frac{\mathcal{E}_{t}}{\sqrt{t}}\nonumber,
\end{align}
where $\mu:=\frac{\nu}{1-\mathbb{E}^{q}[H(a)]}$. Rearranging the terms in \eqref{rearrange},
we get
\begin{equation}
\frac{N_{t}-\mu t}{\sqrt{t}}
=\frac{1}{1-\mathbb{E}^{q}[H(a)]}
\left[\frac{N_{t}-\int_{0}^{t}\lambda_{s}ds}{\sqrt{t}}+\frac{\sum_{\tau_{i}<t}(H(a_{i})-\mathbb{E}^{q}[H(a)])}{\sqrt{t}}
-\frac{\mathcal{E}_{t}}{\sqrt{t}}\right].
\end{equation}
It is easy to check that $\frac{\mathcal{E}_{t}}{\sqrt{t}}\rightarrow 0$ in probability as $t\rightarrow\infty$. To see this,
first notice that $\mathbb{E}[\lambda_{t}]\leq\frac{\nu}{1-\mathbb{E}^{q}[H(a)]}$ uniformly in $t$.
Let $g(t,a):=\int_{t}^{\infty}h(s,a)ds$. We have $\mathcal{E}_{t}=\sum_{\tau_{i}<t}g(t-\tau_{i},a_{i})$
and thus
\begin{align}
\mathbb{E}[\mathcal{E}_{t}]
&=\int_{0}^{t}\int_{\mathbb{X}}g(t-s,a)q(da)\mathbb{E}[\lambda_{s}]ds
\\
&\leq\frac{\nu}{1-\mathbb{E}^{q}[H(a)]}\int_{0}^{t}\int_{\mathbb{X}}g(t-s,a)q(da)ds\nonumber
\\
&=\frac{\nu}{1-\mathbb{E}^{q}[H(a)]}\int_{0}^{t}\mathbb{E}^{q}[g(s,a)]ds.\nonumber
\end{align}
Hence, by L'H\^{o}pital's rule,
\begin{align}
\lim_{t\rightarrow\infty}\frac{1}{t^{1/2}}\int_{0}^{t}\mathbb{E}^{q}[g(s,a)]ds
&=\lim_{t\rightarrow\infty}\frac{\mathbb{E}^{q}[g(t,a)]}{\frac{1}{2}t^{-1/2}}
\\
&=\lim_{t\rightarrow\infty}2t^{1/2}\int_{t}^{\infty}\mathbb{E}^{q}[h(s,a)]ds=0.\nonumber
\end{align}
Hence, $\frac{\mathcal{E}_{t}}{\sqrt{t}}\rightarrow 0$ in probability as $t\rightarrow\infty$.

Furthermore, $M_{1}(t):=N_{t}-\int_{0}^{t}\lambda_{s}ds$ and $M_{2}(t):=\sum_{\tau_{i}<t}(H(a_{i})-\mathbb{E}^{q}[H(a)])$
are both martingales. 

Moreover, since $\int_{0}^{t}\lambda_{s}ds$ is of finite variation, the quadratic
variation of $M_{1}(t)+M_{2}(t)$ is the same as the quadratic variation of $N_{t}+M_{2}(t)$.
And notice that $N_{t}+M_{2}(t)=\sum_{\tau_{i}<t}(1+H(a_{i})-\mathbb{E}^{q}[H(a)])$ which
has quadratic variation
\begin{equation}
\sum_{\tau_{i}<t}(1+H(a_{i})-\mathbb{E}^{q}[H(a)])^{2}.
\end{equation}
By the standard law of large numbers, we have
\begin{align}
\frac{1}{t}\sum_{\tau_{i}<t}(1+H(a_{i})-\mathbb{E}^{q}[H(a)])^{2}
&=\frac{N_{t}}{t}\cdot\frac{1}{N_{t}}\sum_{\tau_{i}<t}(1+H(a_{i})-\mathbb{E}^{q}[H(a)])^{2}
\\
&\rightarrow\frac{\nu}{1-\mathbb{E}^{q}[H(a)]}\cdot\mathbb{E}^{q}\left[(1+H(a)-\mathbb{E}^{q}[H(a)])^{2}\right]\nonumber
\\
&=\frac{\nu(1+\text{Var}^{q}[H(a)])}{1-\mathbb{E}^{q}[H(a)]},\nonumber
\end{align}
a.s. as $t\rightarrow\infty$. By a standard martingale central limit theorem (see. e.g. Theorem VIII-3.11 of Jacod and Shiryaev \cite{Jacod}), 
we conclude that
\begin{equation}
\frac{N_{t}-\frac{\nu t}{1-\mathbb{E}^{q}[H(a)]}}{\sqrt{t}}
\rightarrow N\left(0,\frac{\nu(1+\text{Var}^{q}[H(a)])}{(1-\mathbb{E}^{q}[H(a)])^{3}}\right),
\end{equation}
in distribution as $t\rightarrow\infty$.
\end{proof}

\section{Proof of Large Deviation Principle}\label{LDPProof}

\subsection{Limit of a Logarithmic Moment Generating Function}

In this subsection, we prove the existence of the limit of the logarithmic moment generating function
$\lim_{t\rightarrow\infty}\frac{1}{t}\log\mathbb{E}[e^{\theta N_{t}}]$ and give a variational formula 
and a more explicit formula for this limit.

\begin{theorem}\label{logarithmic}
Assume \eqref{lessthanone} and that there exists some $\theta>0$, so that $\int_{\mathbb{X}}e^{\theta H(a)}q(da)<\infty$
and $q(da)$ has a continuous density.
The limit $\Gamma(\theta)$ of the logarithmic moment generating function is
\begin{equation}
\Gamma(\theta)=\lim_{t\rightarrow\infty}\frac{1}{t}\log\mathbb{E}[e^{\theta N_{t}}]
=
\begin{cases}
\nu(f(\theta)-1) &\text{if $\theta\in(-\infty,\theta_{c}]$}
\\
+\infty &\text{otherwise}
\end{cases},
\end{equation}
where $f(\theta)$ is the minimal solution to $x=\int_{\mathbb{X}}e^{\theta+H(a)(x-1)}q(da)$ and
\begin{equation}\label{thetastar}
\theta_{c}=-\log\int_{\mathbb{X}}H(a)e^{H(a)(x_{c}-1)}q(da)>0,
\end{equation}
where $x_{c}>1$ satisfies the equation $x\int_{\mathbb{X}}H(a)e^{H(a)(x-1)}q(da)=\int_{\mathbb{X}}e^{H(a)(x-1)}q(da)$.
\end{theorem}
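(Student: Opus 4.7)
The plan is to reduce the problem to analysing a single cluster via the Poisson cluster (immigration--birth) representation of the marked linear Hawkes process, which is the classical Hawkes--Oakes construction extended to marks and is available under \eqref{lessthanone}. In this representation, immigrants form a Poisson process of rate $\nu$ on $[0,\infty)$, each immigrant carries an independent mark $\sim q$, and an individual with mark $a$ independently spawns a Poisson$(H(a))$ number of direct offspring at ages drawn from the density $h(\cdot,a)/H(a)$, each offspring again with a fresh mark $\sim q$. Since distinct clusters are independent, the exponential formula for Poisson point processes yields the identity
\begin{equation*}
\log \mathbb{E}\!\left[e^{\theta N_{t}}\right] \;=\; \nu\int_{0}^{t}\Bigl(\mathbb{E}\!\left[e^{\theta K^{(u)}}\right]-1\Bigr)\,du,
\end{equation*}
where $K^{(u)}$ is the number of points deposited by a single cluster within time $u$ of its (randomly marked) ancestor.

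I would then derive the fixed-point equation for the full-cluster MGF $f(\theta):=\mathbb{E}[e^{\theta K}]=\lim_{u\to\infty}\mathbb{E}[e^{\theta K^{(u)}}]$. Conditioning on the ancestor's mark $a$ and on its Poisson$(H(a))$ direct children, each of which begins an independent copy of the cluster, the Poisson MGF formula gives
\begin{equation*}
f(\theta)\;=\;e^{\theta}\int_{\mathbb{X}}\exp\!\bigl(H(a)(f(\theta)-1)\bigr)\,q(da),
\end{equation*}
which rearranges to the fixed-point equation $x=\int e^{\theta+H(a)(x-1)}q(da)$. Applying the same decomposition generation by generation shows that the truncated MGFs $\phi_{n}(\theta):=\mathbb{E}[e^{\theta K_{[n]}}]$ are iterates of the convex, monotone map $T_\theta(x):=\int e^{\theta+H(a)(x-1)}q(da)$ starting from $\phi_{0}(\theta)=e^{\theta}$; by monotone convergence they climb to $f(\theta)$, which is therefore the minimal fixed point of $T_\theta$ when one exists. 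Substituting into the exponential formula and applying a Ces\`{a}ro argument gives $\frac{1}{t}\log\mathbb{E}[e^{\theta N_{t}}]\to\nu(f(\theta)-1)$ whenever $f(\theta)<\infty$.

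To locate $\theta_c$ I would analyse $e^{-\theta}=G(x)/x$ with $G(x):=\int e^{H(a)(x-1)}q(da)$. Since $G(1)=1$ and $G'(1)=\mathbb{E}^q[H(a)]<1$ by \eqref{lessthanone}, the function $\psi(x):=G(x)/x$ satisfies $\psi(1)=1$ and $\psi'(1)<0$, so $\psi$ dips strictly below $1$ just past $1$; the growth hypothesis \eqref{anothercondition} then forces $\psi$ to re-ascend to $+\infty$, so $\psi$ attains its minimum at some $x_c>1$ with $\psi'(x_c)=0$, i.e.\ $x_c G'(x_c)=G(x_c)$. The minimum value equals $G'(x_c)=e^{-\theta_c}$, giving $\theta_c=-\log\int H(a)e^{H(a)(x_c-1)}q(da)>0$ because $\psi(x_c)<\psi(1)=1$. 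For $\theta\le\theta_c$ the minimal root of $x=e^{\theta}G(x)$ recovers $f(\theta)$.

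For $\theta>\theta_c$ the equation $x=T_\theta(x)$ has no real solution, so $T_\theta(x)-x$ is strictly positive and bounded below on every bounded interval; the iterates $\phi_n(\theta)$ escape to $+\infty$, forcing $f(\theta)=+\infty$, and the integral formula pushes $t^{-1}\log\mathbb{E}[e^{\theta N_t}]\to+\infty$ as well. The main obstacle I expect is the bookkeeping in the monotone-iteration argument to confirm that the physical MGF is selected by the minimal (rather than larger) fixed point of $T_\theta$, and in verifying that \eqref{anothercondition} is exactly the growth hypothesis needed to produce a finite critical $x_c>1$ and strict positivity of $\theta_c$; the cluster construction and the generating-function manipulations themselves should carry through routinely from the unmarked case.
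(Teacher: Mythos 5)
Your proposal is correct and follows essentially the same route as the paper: the immigration--birth (Galton--Watson cluster) representation, the exact exponential formula $F_N(t)=\exp[\nu\int_0^t(F_S(s)-1)\,ds]$, the fixed-point recursion identifying $f(\theta)$ as the minimal root of $x=\int_{\mathbb{X}} e^{\theta+H(a)(x-1)}q(da)$ via monotone convergence, and the tangency analysis producing $x_c$ and $\theta_c$ from condition \eqref{anothercondition} --- this is precisely the content of the paper's Lemma \ref{upperbound} together with the critical-value computation appearing in Lemma \ref{lowerbound}. The only structural difference is that the paper additionally runs a Girsanov-tilting/ergodic-theorem lower bound, which is logically redundant for Theorem \ref{logarithmic} itself since, as you observe, the cluster identities are equalities and yield both bounds at once.
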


Now, let us prove Theorem \ref{logarithmic}.

\begin{proof}[Proof of Theorem \ref{logarithmic}]
It is well known that a linear Hawkes process has an immigration-birth representation, see e.g. Hawkes and Oakes \cite{HawkesII}.
The immigrants (roots) arrive via a standard Poisson process with constant intensity $\nu>0$. Each
immigrant generates children according to a Galton-Watson tree.
Consider a random, rooted tree (with root, i.e. immigrant, at time $0$) associated to the Hawkes process via the Galton-Watson
interpretation. Note the root is unmarked at the start of the process so the marking goes into the expectation calculation later.
Let $K$ be the number of
children of the root node, and let $S^{(1)}_t,
S^{(2)}_t,
\ldots,S^{(K)}_t$ be the number of descendants of root's $k$-th child that were born before time $t$ (including
$k$-th child if an only
if it was born before time $t$). Let $S_t$ be the total number of children in tree before time $t$ including root node.
Then
\begin{align}
F_S(t) 	&:=\E[\exp(\theta S_t)]\\
	&= \sum_{k=0}^{\infty} \E[\exp(\theta S_t)| K=k ]\P(K=k) \\
	&= \exp(\theta) \sum_{k=0}^{\infty} \P(K=k) \prod_{i=1}^k \E\left[\exp\left(\theta S_t^{(i)}\right)\right]\\
	&= \exp(\theta) \sum_{k=0}^{\infty} \E\left[\exp\left(\theta S_{t}^{(1)}\right)\right]^k \P(K=k)\\
	&= \exp(\theta) \sum_{k=0}^{\infty} \int_{\mathbb{X}} \left[ \left( \int_{0}^{t} \frac{h(s,a)}{H(a)}F_S(t-s)ds  \right)^k 
e^{-H(a)}\frac{H(a)^k}{k!} \right] q(da)\\
	&= \int_{\mathbb{X}} \exp  \left( \theta+\int_0^t h(s,a)(F_S(t-s)-1)ds \right) q(da).
\end{align}
Now observe that by definition $F_S(t)=\E[\exp(\theta S_t)]$. When $\theta\leq 0$, $F_{S}(t)$ is decreasing in $t$
and also $0\leq F_{S}(t)\leq 1$. Thus, $F_{S}(t)$ converges to a finite limit $x_{\ast}$ as $t\rightarrow\infty$, which
satisfies
\begin{equation} \label{eq:x_*}
x=\int_{\mathbb{X}} \exp \left[ \theta+H(a)(x-1) \right] q(da).
\end{equation}
Similarly, for $\theta>0$, $F_{S}(t)$ is increasing in $t$ and either $F_{S}(t)\rightarrow\infty$ as $t\rightarrow\infty$
or it converges to some finite limit $x_{\ast}$ that satisfies \eqref{eq:x_*}.

Next, we need to determine for what values of $\theta$ the solution of \eqref{eq:x_*} exists.
Let
\begin{equation}
G(x)=e^{\theta}\int_{\mathbb{X}}e^{H(a)(x-1)}q(da)-x.
\end{equation}
If $\theta=0$, then $G(x)=\int_{\mathbb{X}}e^{H(a)(x-1)}q(da)-x$ satisfies $G(1)=0$, $G(\infty)=\infty$
and $G'(1)=\mathbb{E}^{q}[H(a)]-1<0$ which implies $\min_{x>1}G(x)<0$.
Hence, there exists some critical $\theta_{c}>0$ such that $\min_{x>1}G(x)=0$.
The critical values $x_{c}$ and $\theta_{c}$ satisfy $G(x_{c})=G'(x_{c})=0$, which implies
\begin{equation}
\theta_{c}=-\log\int_{\mathbb{X}}H(a)e^{H(a)(x_{c}-1)}q(da),
\end{equation}
where $x_{c}>1$ satisfies the equation $x\int_{\mathbb{X}}H(a)e^{H(a)(x-1)}q(da)=\int_{\mathbb{X}}e^{H(a)(x-1)}q(da)$. 
Hence \eqref{eq:x_*} has finite solutions if and only if $\theta\leq\theta_{c}$. Moreover, it is easy to
see that $G(x)$ is strictly convex in $x$ and hence there can be at most two (positive) solutions to \eqref{eq:x_*}.
When $\theta<0$,
\begin{equation}
G(e^{\theta})=e^{\theta}\left[\int_{\mathbb{X}}e^{H(a)(e^{\theta}-1)}q(da)-1\right]
=e^{\theta}\left[\mathbb{E}^{q}\left[e^{H(a)(e^{\theta}-1)}\right]-1\right]<0,
\end{equation}
and $F_{S}(0)=e^{\theta}$ and $F_{S}(t)$ is decreasing in $t$ and therefore it converges to the smaller solution 
of \eqref{eq:x_*}. Similary, when $\theta>0$,
\begin{equation}
G(e^{\theta})=e^{\theta}\left[\int_{\mathbb{X}}e^{H(a)(e^{\theta}-1)}q(da)-1\right]
=e^{\theta}\left[\mathbb{E}^{q}\left[e^{H(a)(e^{\theta}-1)}\right]-1\right]>0,
\end{equation}
and $F_{S}(0)=e^{\theta}$ and $F_{S}(t)$ is increasing in $t$ and for $\theta\leq\theta_{c}$ we know that $F_{S}(t)$
converges to a finite limit and therefore it must converge to the smaller solution of \eqref{eq:x_*} as $t\rightarrow\infty$.

Finally, since random roots arrive according to a Poisson process with constant intensity $\nu>0$,
we have
\begin{equation}
F_N(t) 	:=\E[\exp(\theta N_t)]= \exp  \left[ \nu \int_0^t (F_S(t-s)-1) ds  \right].
\end{equation}
But since $F_S(s) \uparrow x_*$ as $s \to \infty$ we obtain the main result 
\begin{equation}
\frac 1t \log F_N(t)=\nu \frac 1t \left[ \int_0^t \left( F_S(s)-1 \right) ds  \right] \mathop{\longrightarrow}_{t \to \infty}
\nu(x_*-1) ,
\end{equation}
if $\theta\leq\theta_{c}$, which proves the desired formula.
\end{proof}

\subsection{Large Deviation Principle}

In this section, we prove the main result, i.e. Theorem \ref{LDP} by using the G\"{a}rtner-Ellis theorem for the upper bound
and tilting method for the lower bound. Before we proceed, let us first state a lemma that will be used in the proof
the lower bound in Theorem \ref{LDP}. This result can be found in Br\'{e}maud et al. \cite{BremaudII}.

\begin{lemma}\label{formula}[Br\'{e}maud et al. \cite{BremaudII}]
Consider a linear marked Hawkes process with intensity
\begin{equation}
\lambda_{t}:=\alpha+\beta Z_{t}:=\alpha+\beta\sum_{\tau_{i}<t}h(t-\tau_{i},a_{i}),
\end{equation}
and $\beta\mathbb{E}^{q}[H(a)]<1$, where the $a_{i}$ are i.i.d. random marks with the common law $q(da)$
independent of the previous arrival times, then
there exists a unique invariant measure $\pi$ for $Z_{t}$ such that
\begin{equation}
\int_{0}^{\infty}\lambda(z)\pi(dz)=\frac{\alpha}{1-\beta\mathbb{E}^{q}[H(a)]}.
\end{equation}
\end{lemma}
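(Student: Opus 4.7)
The plan is to reduce the identity to a single scalar mean-field equation obtained by taking expectations in the stationary regime. First I would invoke, under the subcriticality hypothesis $\beta\mathbb{E}^{q}[H(a)]<1$, the standard existence-and-uniqueness result for the stationary linear marked Hawkes process (via the Hawkes--Oakes immigration-birth representation, or equivalently the fixed-point argument of Brémaud--Massoulié specialized to the linear case). In the immigration-birth picture, immigrants arrive as a homogeneous Poisson process of rate $\alpha$, each point of mark $a$ generates a Poisson offspring process of intensity $\beta h(\cdot,a)$, and the expected cluster size $\sum_{k\geq 0}(\beta\mathbb{E}^{q}[H(a)])^{k}$ is finite precisely when $\beta\mathbb{E}^{q}[H(a)]<1$. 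The law of $Z_{t}$ under this stationary version then defines the desired $\pi$, and uniqueness of $\pi$ follows from the uniqueness of the stationary Hawkes measure.

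Second, I would compute $\int\lambda(z)\pi(dz)=\mathbb{E}_{\pi}[\alpha+\beta Z_{0}]$ by a compensator argument. Under the stationary extension to $\mathbb{R}$, the marked compensator of the point process with marks in $\mathbb{X}$ is $\lambda_{s}\,ds\,q(da)$, so Campbell's formula gives
\begin{equation*}
\mathbb{E}[Z_{t}]=\mathbb{E}\!\left[\sum_{\tau_{i}<t}h(t-\tau_{i},a_{i})\right]=\mathbb{E}\!\left[\int_{-\infty}^{t}\!\!\int_{\mathbb{X}}h(t-s,a)\,q(da)\,\lambda_{s}\,ds\right].
\end{equation*}
Stationarity makes $\mathbb{E}[\lambda_{s}]$ a constant $\mu$ independent of $s$, so pulling it out and changing variables $u=t-s$ yields $\mathbb{E}[Z_{t}]=\mu\,\mathbb{E}^{q}[H(a)]$. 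Substituting into $\mu=\mathbb{E}[\lambda_{t}]=\alpha+\beta\mathbb{E}[Z_{t}]$ gives $\mu=\alpha+\beta\mu\,\mathbb{E}^{q}[H(a)]$, and solving (permitted because $\beta\mathbb{E}^{q}[H(a)]<1$) produces $\mu=\alpha/(1-\beta\mathbb{E}^{q}[H(a)])$, which is the claimed identity.

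The only genuine obstacle is the first step: verifying that a legitimate stationary version exists and that $\mathbb{E}[\lambda_{s}]$ is finite so that the compensator calculation is rigorous rather than formal. I would handle finiteness by the cluster representation (expected intensity equals the immigration rate times the expected total cluster size, which is $\alpha/(1-\beta\mathbb{E}^{q}[H(a)])$, finite by assumption); this also a posteriori matches the formula we derive, providing a consistency check. The second step is then a routine application of Campbell's formula, with the $\mathbb{X}$-integration commuting with the time-integration by Fubini since $h\geq 0$ and $\int_{0}^{\infty}\!\int_{\mathbb{X}}h(t,a)q(da)\,dt<\infty$.
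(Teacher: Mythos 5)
Your proposal is correct and follows essentially the same route as the paper: the paper likewise takes ergodicity/stationarity as known and derives the identity from the balance equation $\int\lambda\,d\pi=\alpha+\beta\,\mathbb{E}^{q}[H(a)]\int\lambda\,d\pi$, which is exactly the fixed-point relation you obtain via Campbell's formula. You simply supply the justifications (cluster representation for existence and finiteness, the compensator computation) that the paper leaves implicit.
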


Now we are ready to prove Theorem \ref{LDP}.

\begin{proof}[Proof of Theorem \ref{LDP}]
For the upper bound, since we have Theorem \ref{logarithmic}, we can simply apply G\"{a}rtner-Ellis theorem 
(see e.g. Dembo and Zeitouni \cite{Dembo}). 
To prove the lower bound, it suffices to show that for any $x>0$, $\epsilon>0$, we have
\begin{equation}
\liminf_{t\rightarrow\infty}\frac{1}{t}\log\mathbb{P}\left(\frac{N_{t}}{t}\in B_{\epsilon}(x)\right)
\geq-\sup_{\theta\in\mathbb{R}}\{\theta x-\Gamma(\theta)\},
\end{equation}
where $B_{\epsilon}(x)$ denotes the open ball centered at $x$ with radius $\epsilon$. 

The intensity at time $t$ is $\lambda_{t}:=\lambda(Z_{t})$ where $\lambda(z)=\nu+z$
and $Z_{t}=\sum_{\tau_{i}<t}h(t-\tau_{i},a_{i})$.
We tilt $\lambda$ to $\hat{\lambda}$ and $q$ to $\hat{q}$ such that by Girsanov formula
the tilted probability measure $\hat{\mathbb{P}}$ is given by
\begin{equation}
\frac{d\hat{\mathbb{P}}}{d\mathbb{P}}\bigg|_{\mathcal{F}_{t}}
=\exp\left\{\int_{0}^{t}(\lambda(Z_{s})-\hat{\lambda}(Z_{s}))ds+\int_{0}^{t}\left[\log\left(\frac{\hat{\lambda}(Z_{s})}{
\lambda(Z_{s})}\right)
+\log\left(\frac{d\hat{q}}{dq}\right)\right]dN_{s}\right\}.
\end{equation}
Let $\mathcal{Q}_{e}$ be the set of $(\hat{\lambda},\hat{q},\hat{\pi})$ such that the marked Hawkes process with 
intensity $\hat{\lambda}(Z_{t})$ and random 
marks distributed as $\hat{q}$ is ergodic with $\hat{\pi}$ as the invariant measure of $Z_{t}$.

By Jensen's inequality, 
\begin{align}
&\frac{1}{t}\log\mathbb{P}\left(\frac{N_{t}}{t}\in B_{\epsilon}(x)\right)
\\
&=\frac{1}{t}\log\int_{\frac{N_{t}}{t}\in B_{\epsilon}(x)}\frac{d\mathbb{P}}{d\hat{\mathbb{P}}}d\hat{\mathbb{P}}\nonumber
\\
&=\frac{1}{t}\log\hat{\mathbb{P}}\left(\frac{N_{t}}{t}\in B_{\epsilon}(x)\right)
-\frac{1}{t}\log\left[\frac{1}{\hat{\mathbb{P}}\left(\frac{N_{t}}{t}\in B_{\epsilon}(x)\right)}
\int_{\frac{N_{t}}{t}\in B_{\epsilon}(x)}\frac{d\hat{\mathbb{P}}}{d\mathbb{P}}d\hat{\mathbb{P}}\right]\nonumber
\\
&\geq\frac{1}{t}\log\hat{\mathbb{P}}\left(\frac{N_{t}}{t}\in B_{\epsilon}(x)\right)
-\frac{1}{\hat{\mathbb{P}}\left(\frac{N_{t}}{t}\in B_{\epsilon}(x)\right)}
\cdot\frac{1}{t}\cdot\hat{\mathbb{E}}\left[1_{\frac{N_{t}}{t}\in B_{\epsilon}(x)}\log\frac{d\hat{\mathbb{P}}}{d\mathbb{P}}\right].\nonumber
\end{align}
By the ergodic theorem (see e.g. Chapter 16.4. of Koralov and Sinai \cite{Koralov}), 
\begin{equation}
\liminf_{t\rightarrow\infty}\frac{1}{t}\log\mathbb{P}\left(\frac{N_{t}}{t}\in B_{\epsilon}(x)\right)
\geq-\mathop{\inf_{0<K<\mathbb{E}^{\hat{q}}[H(a)]^{-1}}}_{ (\hat{\lambda},\hat{q},\hat{\pi})\in\mathcal{Q}_{e}^{x},
\hat{\lambda}=K\lambda} 
\mathcal{H}(\hat{\lambda},\hat{q},\hat{\pi}).
\end{equation}
where $\mathcal{Q}_{e}^{x}$ is defined by
\begin{equation}
\mathcal{Q}_{e}^{x}=\left\{(\hat{\lambda},\hat{q},\hat{\pi})\in\mathcal{Q}_{e}:\int\hat{\lambda}(z)\hat{\pi}(dz)=x\right\}.
\end{equation}
and the relative entropy $\mathcal{H}$ is
\begin{equation}
\mathcal{H}(\hat{\lambda},\hat{q},\hat{\pi})=\int(\lambda-\hat{\lambda})\hat{\pi}+\int\log(\hat{\lambda}/\lambda)\hat{\lambda}\hat{\pi}
+\iint\log(d\hat{q}/dq)\hat{q}\hat{\lambda}\hat{\pi}.
\end{equation}
By Lemma \ref{formula}, 
\begin{align}
&\inf_{0<K<\mathbb{E}^{\hat{q}}[H(a)]^{-1},x=\frac{\nu K}{1-K\mathbb{E}^{\hat{q}}[H(a)]},(\hat{\lambda},\hat{q},\hat{\pi})\in\mathcal{Q}_{e},\hat{\lambda}=K\lambda}
\mathcal{H}(\hat{\lambda},\hat{q},\hat{\pi})
\\
&=\inf_{K=\frac{x}{x\mathbb{E}^{\hat{q}}[H(a)]+\nu},(\hat{\lambda},\hat{q},\hat{\pi})\in\mathcal{Q}_{e},\hat{\lambda}=K\lambda}
\left\{\frac{1}{K}-1+\log K
+\mathbb{E}^{\hat{q}}\left[\log\frac{d\hat{q}}{dq}\right]\right\}\int\hat{\lambda}\hat{\pi}\nonumber
\\
&=\inf_{\hat{q}}\left\{\mathbb{E}^{\hat{q}}[H(a)]+\frac{\nu}{x}-1+\log\left(\frac{x}{x\mathbb{E}^{\hat{q}}[H(a)]+\nu}\right)
+\mathbb{E}^{\hat{q}}\left[\log\frac{d\hat{q}}{dq}\right]\right\}x\nonumber
\\
&=\inf_{\hat{q}}\left\{x\mathbb{E}^{\hat{q}}[H(a)]+\nu-x+x\log\left(\frac{x}{x\mathbb{E}^{\hat{q}}[H(a)]+\nu}\right)
+x\mathbb{E}^{\hat{q}}\left[\log\frac{d\hat{q}}{dq}\right]\right\}.\nonumber
\end{align}
Next, let us find a more explict form for the Legendre-Fenchel transform of $\Gamma(\theta)$.
\begin{equation}\label{theta}
\sup_{\theta\in\mathbb{R}}\{\theta x-\Gamma(\theta)\}=\sup_{\theta\in\mathbb{R}}\{\theta x-\nu(f(\theta)-1)\},
\end{equation}
where $f(\theta)=\mathbb{E}^{q}[e^{\theta+(f(\theta)-1)H(a)}]$. Here,
\begin{equation}
f'(\theta)=\mathbb{E}^{q}\left[(1+f'(\theta)H(a))e^{\theta+(f(\theta)-1)H(a)}\right].
\end{equation}
So the optimal $\theta_{\ast}$ for \eqref{theta} would satisfy $f'(\theta_{\ast})=\frac{x}{\nu}$ and $\theta_{\ast}$ and $x_{\ast}=f(\theta_{\ast})$
satisfy the following equations
\begin{equation}
\begin{cases}
x_{\ast}=\mathbb{E}^{q}\left[e^{\theta_{\ast}+(x_{\ast}-1)H(a)}\right]
\\
\frac{x}{\nu}=x_{\ast}+\frac{x}{\nu}\mathbb{E}^{q}\left[H(a)e^{\theta_{\ast}+(x_{\ast}-1)H(a)}\right]
\end{cases},
\end{equation}
and $\sup_{\theta\in\mathbb{R}}\{\theta x-\Gamma(\theta)\}=\theta_{\ast}x-\nu(x_{\ast}-1)$. 

On the other hand, letting $dq_{\ast}=\frac{e^{(x_{\ast}-1)H(a)}}{\mathbb{E}^{q}[e^{(x_{\ast}-1)H(a)}]}dq$, we have
\begin{equation}
\mathbb{E}^{q_{\ast}}[H(a)]=\frac{\mathbb{E}^{q}\left[e^{\theta_{\ast}+(x_{\ast}-1)H(a)}\right]}
{\mathbb{E}^{q}\left[e^{(x_{\ast}-1)H(a)}\right]}=\frac{1}{x_{\ast}}-\frac{\nu}{x},
\end{equation}
and $\mathbb{E}^{q_{\ast}}[\log\frac{dq_{\ast}}{dq}]=(x_{\ast}-1)\mathbb{E}^{q_{\ast}}[H(a)]-\log\mathbb{E}^{q}[e^{(x_{\ast}-1)H(a)}]$, 
which imply
\begin{align}
&\liminf_{t\rightarrow\infty}\frac{1}{t}\log\mathbb{P}\left(\frac{N_{t}}{t}\in B_{\epsilon}(x)\right)
\\
&\geq-\inf_{\hat{q}}\left\{x\mathbb{E}^{\hat{q}}[H(a)]+\nu-x+x\log\left(\frac{x}{x\mathbb{E}^{\hat{q}}[H(a)]+\nu}\right)
+x\mathbb{E}^{\hat{q}}\left[\log\frac{d\hat{q}}{dq}\right]\right\}\nonumber
\\
&\geq-\left\{x\mathbb{E}^{q_{\ast}}[H(a)]+\nu-x+x\log\left(\frac{x}{x\mathbb{E}^{q_{\ast}}[H(a)]+\nu}\right)
+x\mathbb{E}^{q_{\ast}}\left[\log\frac{dq_{\ast}}{dq}\right]\right\}\nonumber
\\
&=\theta_{\ast}x-\nu(x_{\ast}-1)=\sup_{\theta\in\mathbb{R}}\{\theta x-\Gamma(\theta)\}.\nonumber
\end{align}
\end{proof}

\begin{remark}
An alternative proof of the lower bound of the large deviation principle in Theorem \ref{LDP} is by checking the essential smoothness
condition in G\"{a}rtner-Ellis theorem and applying it to prove the lower bound directly. Nevertheless, our tilting approach
has the advantage of pin-pointing to the most likely path when a rare event occurs.
\end{remark}

\section{Risk Model with Marked Hawkes Claims Arrivals}\label{RiskModel}

We consider the following risk model for the surplus process $R_{t}$ of an insurance portfolio,
\begin{equation}
R_{t}=u+\rho t-\sum_{i=1}^{N_{t}}C_{i},
\end{equation}
where $u>0$ is the initial reserve, $\rho>0$ is the constant premium and the $C_{i}$'s are i.i.d. positive random variables
with the common distribution $\mu(dC)$. $C_{i}$ represents the claim size at the $i$th arrival time, 
these being independent of $N_{t}$, a marked Hawkes process.

For $u>0$, let
\begin{equation}
\tau_{u}=\inf\{t>0: R_{t}\leq 0\},
\end{equation}
and denote the infinite and finite horizon ruin probabilities by
\begin{equation}
\psi(u)=\mathbb{P}(\tau_{u}<\infty),\quad\psi(u,uz)=\mathbb{P}(\tau_{u}\leq uz),\quad u,z>0.
\end{equation}

We first consider the case when the claim sizes have light-tails, i.e. there exists some $\theta>0$ so that
$\int e^{\theta C}\mu(dC)<\infty$.

By the law of large numbers,
\begin{equation}
\lim_{t\rightarrow\infty}\frac{1}{t}\sum_{i=1}^{N_{t}}C_{i}=\frac{\mathbb{E}^{\mu}[C]\nu}{1-\mathbb{E}^{q}[H(a)]}.
\end{equation}
Therefore, to exclude the trivial case, we need to assume that
\begin{equation}\label{between}
\frac{\mathbb{E}^{\mu}[C]\nu}{1-\mathbb{E}^{q}[H(a)]}<\rho<\frac{\nu(x_{c}-1)}{\theta_{c}},
\end{equation}
where the critical values $\theta_{c}$ and $x_{c}=f(\theta_{c})$ satisfy
\begin{equation}\label{ftheta}
\begin{cases}
x_{c}=\int_{\mathbb{R}^{+}}\int_{\mathbb{X}}e^{\theta_{c}C+H(a)(x_{c}-1)}q(da)\mu(dC)
\\
1=\int_{\mathbb{R}^{+}}\int_{\mathbb{X}}H(a)e^{H(a)(x_{c}-1)+\theta_{c}C}q(da)\mu(dC)
\end{cases}.
\end{equation}

Following the proofs of large deviation results in Section \ref{LDPProof}, we have
\begin{equation}
\Gamma_{C}(\theta):=\lim_{t\rightarrow\infty}\frac{1}{t}\log\mathbb{E}\left[e^{\theta\sum_{i=1}^{N_{t}}C_{i}}\right]
=
\begin{cases}
\nu(x-1) &\text{if $\theta\in(-\infty,\theta_{c}]$}
\\
+\infty &\text{otherwise}
\end{cases},
\end{equation}
where $x$ is the minimal solution to the equation
\begin{equation}
x=\int_{\mathbb{R}^{+}}\int_{\mathbb{X}}e^{\theta C+(x-1)H(a)}q(da)\mu(dC).
\end{equation}

Before we proceed, let us quote a result from Glynn and Whitt \cite{Glynn}, which
will be used in our proof of Theorem \ref{InfiniteHorizon}.

\begin{theorem}[Glynn and Whitt \cite{Glynn}]\label{GlynnThm}
Let $S_{n}$ be random variables. $\tau_{u}=\inf\{n: S_{n}>u\}$ and $\psi(u)=\mathbb{P}(\tau_{u}<\infty)$. 
Assume that there exist $\gamma,\epsilon>0$ such that

(i) $\kappa_{n}(\theta)=\log\mathbb{E}[e^{\theta S_{n}}]$ is well defined and finite for $\gamma-\epsilon<\theta<\gamma+\epsilon$.

(ii) $\limsup_{n\rightarrow\infty}\mathbb{E}[e^{\theta(S_{n}-S_{n-1})}]<\infty$ for $-\epsilon<\theta<\epsilon$.

(iii) $\kappa(\theta)=\lim_{n\rightarrow\infty}\frac{1}{n}\kappa_{n}(\theta)$ exists and is finite for $\gamma-\epsilon<\theta<\gamma+\epsilon$.

(iv) $\kappa(\gamma)=0$ and $\kappa$ is differentiable at $\gamma$ with $0<\kappa'(\gamma)<\infty$.

Then, $\lim_{u\rightarrow\infty}\frac{1}{u}\log\psi(u)=-\gamma$.
\end{theorem}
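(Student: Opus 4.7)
The plan is to establish matching upper and lower bounds $\limsup_{u\to\infty}\frac{1}{u}\log\psi(u)\leq -\gamma$ and $\liminf_{u\to\infty}\frac{1}{u}\log\psi(u)\geq -\gamma$ by the classical Cramér-Lundberg strategy adapted to the general (not necessarily i.i.d.) setting of $S_n$. The engine on both sides is exponential tilting at the Lundberg exponent $\gamma$, which is well-posed because $\kappa(\gamma)=0$ and $\kappa$ is finite and differentiable in a neighborhood of $\gamma$ by (i), (iii), (iv).

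For the upper bound, I would first write $\psi(u)=\mathbb{P}(\sup_{n\geq 1}S_n>u)$ and apply Chernoff at level $\gamma$: for each $n$, $\mathbb{P}(S_n>u)\leq e^{-\gamma u+\kappa_n(\gamma)}$. Since $\kappa_n(\gamma)/n\to\kappa(\gamma)=0$ by (iii), one has $\kappa_n(\gamma)=o(n)$, and more usefully $\kappa_n(\gamma+\delta)/n\to\kappa(\gamma+\delta)>0$ for small $\delta>0$ (because $\kappa'(\gamma)>0$). I would then split the union $\{\sup_n S_n>u\}=\bigcup_{n\leq Tu}\{S_n>u\}\cup\bigcup_{n>Tu}\{S_n>u\}$: for $n\leq Tu$ use the $\gamma$-Chernoff bound with $\kappa_n(\gamma)=o(n)=o(u)$ to get $e^{-\gamma u+o(u)}$; for $n>Tu$ use the $(\gamma+\delta)$-Chernoff bound, whose total contribution is a convergent geometric-type sum of size $e^{-(\gamma+\delta)u+\kappa(\gamma+\delta)n}$, controlled by choosing $T$ large enough that $\gamma+\delta-T\kappa(\gamma+\delta)>\gamma$. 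Taking logs, dividing by $u$, and letting $\delta\downarrow 0$ yields the upper bound.

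For the lower bound I would perform an exponential change of measure, defining $\tilde{\mathbb{P}}$ by $\frac{d\tilde{\mathbb{P}}}{d\mathbb{P}}\big|_{\mathcal{F}_n}=\exp(\gamma S_n-\kappa_n(\gamma))$. Under $\tilde{\mathbb{P}}$, the increments acquire positive mean $\kappa'(\gamma)>0$, so a LLN-type argument gives $S_n/n\to\kappa'(\gamma)$ and hence $\tau_u<\infty$ $\tilde{\mathbb{P}}$-a.s. with $\tau_u/u\to 1/\kappa'(\gamma)=:\alpha$. Reversing the tilt on the event $A_u=\{|\tau_u/u-\alpha|<\varepsilon,\;S_{\tau_u}-u<\eta u\}$, I would write
\begin{equation}
\psi(u)\geq\tilde{\mathbb{E}}\bigl[e^{-\gamma S_{\tau_u}+\kappa_{\tau_u}(\gamma)}\mathbf{1}_{A_u}\bigr]\geq e^{-\gamma(u+\eta u)}\inf_{n\in[(\alpha-\varepsilon)u,(\alpha+\varepsilon)u]}e^{\kappa_n(\gamma)}\;\tilde{\mathbb{P}}(A_u).
\end{equation}
Since $\kappa_n(\gamma)/n\to 0$ and $\tilde{\mathbb{P}}(A_u)\to 1$, taking $\log$, dividing by $u$ and letting $\eta,\varepsilon\downarrow 0$ recovers $-\gamma$.

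The main obstacle is handling $S_n$ without any Markov or i.i.d. structure: $e^{\gamma S_n-\kappa_n(\gamma)}$ is a probability density on $\mathcal{F}_n$ but not strictly a martingale across $n$, and optional-stopping style manipulations at $\tau_u$ require care. This is exactly where condition (ii) enters, controlling the per-step multiplicative deviation $\mathbb{E}[e^{\theta(S_n-S_{n-1})}]$ near $\theta=0$, which is what lets one justify both the overshoot estimate $S_{\tau_u}-u=o(u)$ under $\tilde{\mathbb{P}}$ and the uniformity of $\kappa_n(\gamma)/n\to 0$ along the random index $\tau_u$. Differentiability of $\kappa$ at $\gamma$ in (iv), together with the neighborhood existence in (iii), lets the upper-bound tilt to $\gamma+\delta$ and the lower-bound LLN under $\tilde{\mathbb{P}}$ both be executed without appealing to an ambient random-walk structure.
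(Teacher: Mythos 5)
You should first note that the paper does not prove this statement at all: it is quoted verbatim from Glynn and Whitt \cite{Glynn} and used as a black box in the proof of Theorem \ref{InfiniteHorizon}, so there is no in-paper argument to compare against. Judged on its own, your proposal contains two genuine gaps. The first is in the upper bound: for $n>Tu$ you apply Chernoff at $\gamma+\delta$, but since $\kappa(\gamma)=0$, $\kappa'(\gamma)>0$ and $\kappa$ is convex, we have $\kappa(\gamma+\delta)>0$, so the tail sum $\sum_{n>Tu}e^{-(\gamma+\delta)u+\kappa_{n}(\gamma+\delta)}$ with $\kappa_{n}(\gamma+\delta)\sim n\kappa(\gamma+\delta)$ diverges; moreover increasing $T$ makes your exponent $\gamma+\delta-T\kappa(\gamma+\delta)$ smaller, not larger, so the stated choice of $T$ cannot work. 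The fix is to tilt \emph{below} $\gamma$: differentiability at $\gamma$ with $\kappa'(\gamma)>0$ gives $\kappa(\theta)<0$ for $\theta\in(\gamma-\epsilon',\gamma)$, whence $\kappa_{n}(\theta)\leq -cn+C$ and $\sum_{n\geq 1}e^{-\theta u+\kappa_{n}(\theta)}\leq C_{\theta}e^{-\theta u}$ converges with no splitting needed; letting $\theta\uparrow\gamma$ gives the upper bound.

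The second, more serious, gap is the lower bound. Without i.i.d., Markov, or martingale structure, the densities $e^{\gamma S_{n}-\kappa_{n}(\gamma)}$ define a probability measure on each $\mathcal{F}_{n}$ separately, but these measures need not be consistent across $n$, so there is no single measure $\tilde{\mathbb{P}}$ on $\mathcal{F}_{\infty}$, optional stopping at $\tau_{u}$ is unjustified, and the law of large numbers $S_{n}/n\rightarrow\kappa'(\gamma)$ under the tilt does not follow from hypotheses that only control normalized cumulant generating functions. You flag this yourself, but condition (ii) --- a bound on increment moment generating functions near $\theta=0$ --- does not repair the inconsistency of the tilted family; it is a separate technical condition on single increments. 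The standard way out, and the route of Glynn and Whitt, is to bypass the global change of measure entirely: set $a=\kappa'(\gamma)$, take $n(u)=\lceil u/(a-\varepsilon)\rceil$, bound $\psi(u)\geq\mathbb{P}(S_{n(u)}>u)$, and invoke the local G\"{a}rtner--Ellis lower bound at the point $a$, which is exposed with exposing hyperplane $\gamma$ precisely because $\kappa$ is differentiable at $\gamma$ (condition (iv)); this yields $\frac{1}{n}\log\mathbb{P}(S_{n}/n>a-\varepsilon)\geq-(\gamma a-\kappa(\gamma))+o(1)=-\gamma a+o(1)$ and hence $\liminf_{u\rightarrow\infty}\frac{1}{u}\log\psi(u)\geq-\gamma$ after letting $\varepsilon\downarrow 0$. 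Your overall two-bound architecture is right, but as written neither bound closes.
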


\begin{remark}
We claim that $\Gamma_{C}(\theta)=\rho\theta$ has a unique positive solution $\theta^{\dagger}<\theta_{c}$. 
Let $G(\theta)=\Gamma_{C}(\theta)-\rho\theta$. Notice that $G(0)=0$, $G(\infty)=\infty$, and that $G$ is convex. 
We also have $G'(0)=\frac{\mathbb{E}^{\mu}[C]\nu}{1-\mathbb{E}^{q}[H(a)]}-\rho<0$ and $\Gamma_{C}(\theta_{c})-\rho\theta_{c}>0$ since 
we assume that $\rho<\frac{\nu(f(\theta_{c})-1)}{\theta_{c}}$. Therefore, there exists only one
solution $\theta^{\dagger}\in(0,\theta_{c})$
of $\Gamma_{C}(\theta^{\dagger})=\rho\theta^{\dagger}$.
\end{remark}

\begin{theorem}[Infinite Horizon]\label{InfiniteHorizon}
Assume all the assumptions in Theorem \ref{LDP} and in addition \eqref{between}, we have
$\lim_{u\rightarrow\infty}\frac{1}{u}\log\psi(u)=-\theta^{\dagger}$, where $\theta^{\dagger}\in(0,\theta_{c})$ 
is the unique positive solution of $\Gamma_{C}(\theta)=\rho\theta$.
\end{theorem}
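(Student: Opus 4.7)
The plan is to apply the Glynn--Whitt theorem (Theorem \ref{GlynnThm}) to the discretized walk $S_{n}:=\sum_{i=1}^{N_{n}}C_{i}-\rho n$, taking $\gamma=\theta^{\dagger}$. The ruin event $\{\tau_{u}<\infty\}$ is the event $\{\sup_{t\geq 0}L_{t}\geq u\}$ where $L_{t}:=\sum_{i=1}^{N_{t}}C_{i}-\rho t$; first I would pass from continuous time to integer time. Because $t\mapsto\sum_{i=1}^{N_{t}}C_{i}$ is nondecreasing and $t\mapsto-\rho t$ is affine, for $t\in[n-1,n]$ one has $L_{t}\leq S_{n}+\rho$. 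Hence
\begin{equation}
\mathbb{P}\bigl(\sup_{n\geq 1}S_{n}\geq u\bigr)\leq\psi(u)\leq\mathbb{P}\bigl(\sup_{n\geq 1}S_{n}\geq u-\rho\bigr),
\end{equation}
so both logarithmic rates in $u$ coincide, and the theorem reduces to establishing $\lim_{u\to\infty}u^{-1}\log\mathbb{P}(\sup_{n}S_{n}\geq u)=-\theta^{\dagger}$.

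Next I would verify the four Glynn--Whitt hypotheses. Write $\kappa_{n}(\theta):=\log\mathbb{E}[e^{\theta S_{n}}]=\log\mathbb{E}[e^{\theta\sum_{i=1}^{N_{n}}C_{i}}]-\rho\theta n$. The logarithmic moment generating formula for $\Gamma_{C}$ (obtained by the same immigration--birth argument as Theorem \ref{logarithmic}, but integrating the extra factor $e^{\theta C}$ against $\mu(dC)$) gives immediately (i) $\kappa_{n}(\theta)$ is finite on $(-\infty,\theta_{c})$, and (iii) $\kappa(\theta):=\lim_{n\to\infty}n^{-1}\kappa_{n}(\theta)=\Gamma_{C}(\theta)-\rho\theta$ on the same interval. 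For (iv) I would invoke the remark preceding the theorem: $G(\theta):=\Gamma_{C}(\theta)-\rho\theta$ is convex with $G(0)=0$ and $G'(0)=\mathbb{E}^{\mu}[C]\nu/(1-\mathbb{E}^{q}[H(a)])-\rho<0$ by \eqref{between}, and $G(\theta^{\dagger})=0$; convexity then forces $\kappa'(\theta^{\dagger})=G'(\theta^{\dagger})>0$. Smoothness of $\Gamma_{C}$ on the open interval $(-\infty,\theta_{c})$ (via the implicit equation \eqref{ftheta} and the implicit function theorem, using $\theta^{\dagger}<\theta_{c}$) gives finiteness of $\kappa'(\theta^{\dagger})$.

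The main obstacle is condition (ii), the uniform bound $\limsup_{n}\mathbb{E}[e^{\theta(S_{n}-S_{n-1})}]<\infty$ for $\theta$ in a neighbourhood of $0$. The increment $S_{n}-S_{n-1}=\sum_{i=N_{n-1}+1}^{N_{n}}C_{i}-\rho$ sits on top of an arbitrarily large past history, so the naive bound by $e^{\theta\sum_{i=1}^{N_{n}}C_{i}}$ diverges exponentially. Instead I would use the immigration--birth representation in the spirit of Stabile and Torrisi \cite{Stabile}: decompose the contribution to $[n-1,n]$ into (a) descendants in $[n-1,n]$ of immigrants arriving in $[n-1,n]$, (b) descendants in $[n-1,n]$ of immigrants arriving in $[0,n-1]$, and control each piece by comparison with a Poisson-driven Galton--Watson cluster whose progeny size has MGF finite for small $\theta$ (using $\mathbb{E}^{q}[H(a)]<1$ together with the moment condition on $C$ implicit in the finiteness of $\Gamma_{C}$ near $0$). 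The Poisson superposition property and subcriticality together yield a uniform-in-$n$ bound; equivalently one may couple $N$ to its stationary version, under which the increment law is translation invariant and has finite MGF for $\theta$ small.

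With (i)--(iv) verified, Theorem \ref{GlynnThm} applied to $S_{n}$ and then combined with the sandwich bound of the first paragraph yields $\lim_{u\to\infty}u^{-1}\log\psi(u)=-\theta^{\dagger}$, completing the proof.
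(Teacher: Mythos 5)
Your proposal follows essentially the same route as the paper: discretize $S_{t}=\sum_{i=1}^{N_{t}}C_{i}-\rho t$ along a grid, apply the Glynn--Whitt theorem with $\gamma=\theta^{\dagger}$, and recover the continuous-time statement via the sandwich $\sup_{t}S_{t}\geq\sup_{n}S_{n}\geq\sup_{t}S_{t}-\rho$. In fact you are more explicit than the paper (which simply says ``checking the conditions... and applying it''), particularly in your treatment of condition (ii) via the cluster representation.
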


\begin{proof}
Take $S_{t}=\sum_{i=1}^{N_{t}}C_{i}-\rho t$ and $\kappa_{t}(\theta)=\log\mathbb{E}[e^{\theta S_{t}}]$. 
Then $\lim_{t\rightarrow\infty}\frac{1}{t}\kappa_{t}(\theta)=\Gamma_{C}(\theta)-\rho\theta$. 
Consider $\{S_{nh}\}_{n\in\mathbb{N}}$. We have $\lim_{n\rightarrow\infty}\frac{1}{n}\kappa_{nh}(\theta)=h\Gamma_{C}(\theta)-h\rho\theta$. 
Checking the conditions in Theorem \ref{GlynnThm} and applying it, we get 
\begin{equation}
\lim_{u\rightarrow\infty}\frac{1}{u}\log\mathbb{P}\left(\sup_{n\in\mathbb{N}}S_{nh}>u\right)=-\theta^{\dagger}.
\end{equation}
Finally, notice that
\begin{equation}
\sup_{t\in\mathbb{R}^{+}}S_{t}\geq\sup_{n\in\mathbb{N}}S_{nh}\geq\sup_{t\in\mathbb{R}^{+}}S_{t}-\rho h.
\end{equation}
Hence, $\lim_{u\rightarrow\infty}\frac{1}{u}\log\psi(u)=-\theta^{\dagger}$.
\end{proof}

\begin{theorem}[Finite Horizon]
Under the same assumptions as in Theorem \ref{InfiniteHorizon}, we have
\begin{equation}
\lim_{u\rightarrow\infty}\frac{1}{u}\log\psi(u,uz)=-w(z),\quad\text{for any $z>0$}.
\end{equation}
Here
\begin{equation}
w(z)=
\begin{cases}
z\Lambda_{C}\left(\frac{1}{z}+\rho\right) &\text{if $0<z<\frac{1}{\Gamma'_{C}(\theta^{\dagger})-\rho}$}
\\
\theta^{\dagger} &\text{if $z\geq\frac{1}{\Gamma'_{C}(\theta^{\dagger})-\rho}$}
\end{cases},
\end{equation}
$\Lambda_{C}(x)=\sup_{\theta\in\mathbb{R}}\{\theta x-\Gamma_{C}(\theta)\}$
and $\theta^{\dagger}\in(0,\theta_{c})$ 
is the unique positive solution of $\Gamma_{C}(\theta)=\rho\theta$, as before.
\end{theorem}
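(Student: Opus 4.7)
The plan is to translate the finite horizon ruin into the supremum of the net-claims process $S_t := \tilde{S}_t - \rho t$, where $\tilde{S}_t := \sum_{i=1}^{N_t} C_i$, via $\psi(u, uz) = \mathbb{P}(\sup_{0 \leq t \leq uz} S_t \geq u)$, and then apply large deviation asymptotics for $\tilde{S}_t/t$. Exactly the machinery of Section \ref{LDPProof} applied to the cumulant $\Gamma_C$ in place of $\Gamma$ yields an LDP for $\tilde{S}_t/t$ with convex rate function $\Lambda_C$. A preparatory observation: the function
\begin{equation}
\phi(\alpha) := \alpha \Lambda_C(1/\alpha + \rho) = \sup_{\theta \geq 0}\bigl[\theta - \alpha(\Gamma_C(\theta) - \rho\theta)\bigr]
\end{equation}
is convex in $\alpha > 0$, tends to $\theta_c$ as $\alpha \to 0^+$, and attains its global minimum $\theta^{\dagger}$ at $\alpha^{\ast} := 1/(\Gamma_C'(\theta^{\dagger}) - \rho)$. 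This identifies $w(z) = \min_{\alpha \in (0, z]} \phi(\alpha)$: it equals $\phi(z)$ when $z < \alpha^{\ast}$ and $\theta^{\dagger}$ when $z \geq \alpha^{\ast}$.

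For the lower bound, for any $\alpha \in (0, z]$ one has
\begin{equation}
\psi(u, uz) \geq \mathbb{P}(S_{\alpha u} > u) = \mathbb{P}\bigl(\tilde{S}_{\alpha u}/(\alpha u) > 1/\alpha + \rho\bigr),
\end{equation}
so the LDP lower bound gives $\liminf_{u \to \infty} u^{-1} \log \psi(u, uz) \geq -\phi(\alpha)$. Taking $\alpha = z$ when $z < \alpha^{\ast}$ and $\alpha = \alpha^{\ast}$ when $z \geq \alpha^{\ast}$ delivers the matching lower bound $-w(z)$.

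For the upper bound, discretize with step $h > 0$. Since $S_t$ decreases at rate $\rho$ between arrivals and jumps are positive, an elementary argument gives $\sup_{s \in [(k-1)h, kh]} S_s \leq S_{kh} + \rho h$, whence
\begin{equation}
\psi(u, uz) \leq \sum_{k=0}^{\lceil uz/h \rceil} \mathbb{P}(S_{kh} \geq u - \rho h).
\end{equation}
Applying the Chernoff inequality with $\theta \in [0, \theta_c)$ together with $\log \mathbb{E}[e^{\theta \tilde{S}_t}] = t\Gamma_C(\theta) + o(t)$, summing, and taking $u^{-1} \log$ yields a $\limsup$ bound of $-\sup_{\theta \geq 0} \inf_{\alpha \in [0, z]} [\theta - \alpha(\Gamma_C(\theta) - \rho\theta)]$. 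A case analysis at the kink $\theta = \theta^{\dagger}$ (where $\Gamma_C(\theta^{\dagger}) - \rho \theta^{\dagger} = 0$) identifies this sup-inf with $w(z)$: for $\theta \leq \theta^{\dagger}$ the inner inf equals $\theta$ with maximum $\theta^{\dagger}$; for $\theta > \theta^{\dagger}$ the inner inf is $\theta - z(\Gamma_C(\theta) - \rho\theta)$, with outer maximum $z\Lambda_C(1/z + \rho)$ when $z < \alpha^{\ast}$ (attained at $\theta_z$ solving $\Gamma_C'(\theta_z) = 1/z + \rho$) and $\theta^{\dagger}$ otherwise. Sending $h \downarrow 0$ absorbs the $\rho h$ correction.

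The main technical obstacle is justifying the Chernoff step uniformly in $t = kh$: one needs $\mathbb{E}[e^{\theta \tilde{S}_t}] \leq e^{t\Gamma_C(\theta) + \epsilon t}$ to hold for all $t$ sufficiently large and $\theta$ in a compact subinterval of $[0, \theta_c)$, since we sum over $O(u)$ terms. The pointwise limit from the analogue of Theorem \ref{logarithmic} for $\tilde{S}_t$ is not by itself enough, but the immigration-birth argument of Lemma \ref{upperbound} adapts to provide a non-asymptotic bound $\mathbb{E}[e^{\theta \tilde{S}_t}] \leq e^{\nu t (x_\theta - 1)}$, where $x_\theta$ is the minimal fixed point of the corresponding marked equation; since both $\theta^{\dagger}$ and $\theta_z$ lie strictly inside $[0, \theta_c)$ under \eqref{between}, the required uniformity over a compact set bounded away from $\theta_c$ is within reach.
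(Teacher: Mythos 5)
Your proposal is correct and is essentially the argument the paper has in mind: the paper omits this proof entirely, deferring to Stabile and Torrisi, and your outline is precisely that standard scheme (lower bound by fixing a ruin time $\alpha u$ and invoking the LDP lower bound for $\tilde{S}_t/t$; upper bound by $h$-discretization, a union bound, and Chernoff; then the convex-duality identification of $\inf_{0<\alpha\leq z}\alpha\Lambda_{C}(1/\alpha+\rho)$ with $w(z)$) adapted to the marked setting. You also correctly isolate the one point where the adaptation is nontrivial --- the need for a bound on $\mathbb{E}[e^{\theta\tilde{S}_{t}}]$ valid uniformly in $t$ rather than only asymptotically --- and correctly observe that the immigration-birth computation of Lemma \ref{upperbound} delivers the exact non-asymptotic inequality $\mathbb{E}[e^{\theta\tilde{S}_{t}}]\leq e^{\nu t(x_{\theta}-1)}$ for $\theta\leq\theta_{c}$, since $F_{S}(s)$ increases to the minimal fixed point; the only remaining (harmless) gloss is that for very small $z$ the optimizer $\theta_{z}$ of $\Lambda_{C}(1/z+\rho)$ may sit at the boundary $\theta_{c}$ rather than solve $\Gamma_{C}'(\theta_{z})=1/z+\rho$, in which case one simply takes $\theta=\theta_{c}$ in the sup-inf identity.
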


\begin{proof}
The proof is similar to that in Stabile and Torrisi \cite{Stabile} and we omit it here.
\end{proof}

Next, we are interested to study the case when the claim sizes have heavy tails, i.e. $\int_{\mathbb{R}^{+}}e^{\theta C}\mu(dC)=+\infty$
for any $\theta>0$.

A distribution function $B$ is subexponential, i.e. $B\in\mathcal{S}$ if 
\begin{equation}
\lim_{x\rightarrow\infty}\frac{\mathbb{P}(C_{1}+C_{2}>x)}{\mathbb{P}(C_{1}>x)}=2,
\end{equation}
where $C_{1}$, $C_{2}$ are i.i.d. random variables with distribution function $B$. 
Let us denote $B(x):=\mathbb{P}(C_{1}\geq x)$
and let us assume that $\mathbb{E}[C_{1}]<\infty$ and define $B_{0}(x):=\frac{1}{\mathbb{E}[C]}\int_{0}^{x}\overline{B}(y)dy$,
where $\overline{F}(x)=1-F(x)$ is the complement of any distribution function $F(x)$.

Goldie and Resnick \cite{Goldie} showed that if $B\in\mathcal{S}$ and satisfies some smoothness
conditions, then $B$ belongs to the maximum domain of attraction of either the Frechet distribution
or the Gumbel distribution. In the former case, $\overline{B}$ is regularly varying,
i.e. $\overline{B}(x)=L(x)/x^{\alpha+1}$, for some $\alpha>0$ and we write
it as $\overline{B}\in\mathcal{R}(-\alpha-1)$, $\alpha>0$.

We assume that $B_{0}\in\mathcal{S}$ and either $\overline{B}\in\mathcal{R}(-\alpha-1)$ or $B\in\mathcal{G}$, i.e.
the maximum domain of attraction of Gumbel distribution, that is, 
there exist sequences $a_{n}>0$, $b_{n}\in\mathbb{R}$, such that
$\lim_{n\rightarrow\infty}n\overline{B}(a_{n}x+b_{n})=e^{-x}$, $x\in\mathbb{R}$.
$\mathcal{G}$ includes Weibull and lognormal distributions.

When the arrival process $N_{t}$ satisfies a large deviation result, the probability that it deviates away
from its mean is exponentially small, which is dominated by subexonential distributions. The results in Zhu \cite{ZhuVI}
for the asymptotics of ruin probabilities for risk processes with non-stationary, 
non-renewal arrivals and subexponential claims can be applied in the context of marked Hawkes arrivals.
We have the following infinite-horizon and finite-horizon ruin probability estimates when the claim sizes
are subexponential.

\begin{theorem}
Assume the net profit condition $\rho>\mathbb{E}[C_{1}]\frac{\nu}{1-\mathbb{E}^{q}[H(a)]}$.

(i) (Infinite-Horizon)
\begin{equation}
\lim_{u\rightarrow\infty}\frac{\psi(u)}{\overline{B}_{0}(u)}
=\frac{\nu\mathbb{E}[C_{1}]}{\rho(1-\mathbb{E}^{q}[H(a)])-\nu\mathbb{E}[C_{1}]}.
\end{equation}

(ii) (Finite-Horizon) For any $T>0$,
\begin{align}
&\lim_{u\rightarrow\infty}\frac{\psi(u,uz)}{\overline{B}_{0}(u)}
\\
&=
\begin{cases}
\frac{\nu\mathbb{E}[C_{1}]}{\rho(1-\mathbb{E}^{q}[H(a)])-\nu\mathbb{E}[C_{1}]}
\left[1-\left(1+\left(\frac{\rho(1-\mathbb{E}^{q}[H(a)])-\nu\mathbb{E}[C_{1}]}{\rho(1-\mathbb{E}^{q}[H(a)])}\right)
\frac{T}{\alpha}\right)^{-\alpha}\right]
&\text{if $\overline{B}\in\mathcal{R}(-\alpha-1)$}
\\
\frac{\nu\mathbb{E}[C_{1}]}{\rho(1-\mathbb{E}^{q}[H(a)])-\nu\mathbb{E}[C_{1}]}
\left[1-e^{-\frac{\rho(1-\mathbb{E}^{q}[H(a)])-\nu\mathbb{E}[C_{1}]}{\rho(1-\mathbb{E}^{q}[H(a)])}T}\right]&\text{if $B\in\mathcal{G}$}
\end{cases}.\nonumber
\end{align}
\end{theorem}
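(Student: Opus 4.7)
My plan is to invoke the general subexponential ruin asymptotics developed in Zhu [ZhuVI] for compound processes driven by non-renewal, non-stationary arrivals. The inputs required are: a law of large numbers $N_t/t \to \mu := \nu/(1-\mathbb{E}^q[H(a)])$, an exponential upper bound on $\mathbb{P}(|N_t/t-\mu|>\epsilon)$, and sufficient regularity of the claim tail. The LLN is already established, and Theorem \ref{LDP} supplies the required large deviations upper bound with a strictly positive, convex rate function. Since subexponential tails decay more slowly than any exponential, these inputs show that atypical behaviour of $N_t$ contributes negligibly to the ruin probability on the subexponential scale, so ruin is asymptotically caused by a single large claim.

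For part (i), the single-big-jump principle together with the classical subexponential ruin formula yields, for any arrival process with asymptotic rate $\mu$ and strictly positive net drift $\rho - \mu\mathbb{E}[C_1] > 0$,
\begin{equation*}
\psi(u) \sim \frac{\mu\mathbb{E}[C_1]}{\rho-\mu\mathbb{E}[C_1]}\,\overline{B}_0(u),
\end{equation*}
and substituting $\mu = \nu/(1-\mathbb{E}^q[H(a)])$ reproduces the stated expression. What Zhu [ZhuVI] supplies is exactly the justification that this renewal-arrival formula persists once one only has a LLN and an LDP for $N_t$: one decomposes the sample space into a typical set on which the local arrival rate is close to $\mu$ and an exceptional set of exponentially small probability, then applies the standard subexponential convolution closure on the good set.

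For part (ii), the same heuristic gives
\begin{equation*}
\psi(u,uz) \sim \mu\int_0^{uz}\overline{B}\bigl(u+(\rho-\mu\mathbb{E}[C_1])s\bigr)\,ds,
\end{equation*}
and one normalises by $\overline{B}_0(u)$ before passing to the limit $u\to\infty$. For $\overline{B}\in\mathcal{R}(-\alpha-1)$, Karamata's theorem applied to the incomplete integral of a regularly varying function produces the factor $1-(1+cz/\alpha)^{-\alpha}$ with $c = (\rho(1-\mathbb{E}^q[H(a)])-\nu\mathbb{E}[C_1])/(\rho(1-\mathbb{E}^q[H(a)]))$; for $B\in\mathcal{G}$, the slowly varying behaviour in the tail collapses the same integral into the Gumbel-type form $1-e^{-cz}$. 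In both cases the constant $c$ is precisely the net-drift-to-premium ratio that matches the displayed formulas.

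The principal obstacle is ruling out a pathway to ruin in which the Hawkes cluster mechanism produces many moderately large claims in a short window, competing with the single-big-jump scenario. This is exactly what the argument of Zhu [ZhuVI] handles: the LDP confines such configurations to an event of exponentially small probability, after which the subexponentiality of $B$ dominates and only the single-big-jump contribution survives. Once that domination is in place, the remaining steps reduce to the standard change-of-variables and tail calculus sketched above, and the finite-horizon asymptotics follow immediately from the regularly varying versus Gumbel distinction.
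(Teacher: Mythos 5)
Your proposal takes essentially the same route as the paper: the paper gives no proof of this theorem beyond observing that the LDP makes deviations of $N_t$ exponentially small (hence dominated by the subexponential claim tail) and invoking the general results of Zhu \cite{ZhuVI} for risk processes with non-stationary, non-renewal arrivals. Your sketch matches that argument and, if anything, supplies more detail (the single-big-jump reduction and the Karamata computation for the finite-horizon constants) than the paper itself records.
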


\section{Examples with Explicit Formulas}

In this section, we discuss two examples where an explicit formula exists.

Example \ref{EX1} is about the exponential asymptotics of the infinite-horizon ruin probability when $H(a)$ 
and the claim size $C$ are exponentially distributed.
Example \ref{EX2} gives an explicit expression for the rate function of the large deviation principle
when $H(a)$ is exponentially distributed.

\begin{example}\label{EX1}
Recall that $x$ is the minimal solution of
\begin{equation}
x=\int_{\mathbb{R}^{+}}\int_{\mathbb{X}}e^{\theta C+(x-1)H(a)}q(da)\mu(dC).
\end{equation}
Now, assume that $H(a)$ is exponentially distributed with parameter $\lambda>0$, then, we have
\begin{equation}
x=\mathbb{E}^{\mu}[e^{\theta C}]\frac{\lambda}{\lambda-(x-1)},
\end{equation}
which implies that
\begin{equation}
x=\frac{1}{2}\left\{\lambda+1-\sqrt{(\lambda+1)^{2}-4\lambda\mathbb{E}^{\mu}[e^{\theta C}]}\right\}.
\end{equation}
Now, assume that $C$ is exponentially distributed with parameter $\gamma>0$. Then, 
\begin{equation}
x=\frac{1}{2}\left\{\lambda+1-\sqrt{(\lambda+1)^{2}-4\lambda\frac{\gamma}{\gamma-\theta}}\right\}.
\end{equation}
The infinite horizon probability satisfies $\lim_{u\rightarrow\infty}\frac{1}{u}\log\psi(u)=-\theta^{\dagger}$,
where $\theta^{\dagger}$ satisfies
\begin{equation}
\rho\theta^{\dagger}
=\nu\left(\frac{1}{2}\left\{\lambda+1-\sqrt{(\lambda+1)^{2}-4\lambda\frac{\gamma}{\gamma-\theta^{\dagger}}}\right\}-1\right),
\end{equation}
which implies
\begin{equation}
\frac{2\rho\theta^{\dagger}}{\nu}+1-\lambda=-\sqrt{(\lambda+1)^{2}-\frac{4\lambda\gamma}{\gamma-\theta^{\dagger}}},
\end{equation}
and thus
\begin{equation}
\frac{\rho^{2}}{\nu^{2}}(\theta^{\dagger})^{2}+\frac{\rho\theta^{\dagger}}{\nu}(1-\lambda)
=\lambda-\frac{\lambda\gamma}{\gamma-\theta^{\dagger}}=\frac{-\lambda\theta^{\dagger}}{\gamma-\theta^{\dagger}}.
\end{equation}
Since we are looking for positive $\theta^{\dagger}$, we get the quadratic equation,
\begin{equation}
\rho^{2}(\theta^{\dagger})^{2}-(\rho^{2}\gamma-\rho\nu(1-\lambda))\theta^{\dagger}-(\rho\nu\gamma(1-\lambda)+\lambda\nu^{2})=0.
\end{equation}
Since $\rho>\frac{\mathbb{E}^{\mu}[C]\nu}{1-\mathbb{E}^{q}[H(a)]}=\frac{\nu\lambda}{\gamma(\lambda-1)}$, we have
$\rho\nu\gamma(1-\lambda)+\lambda\nu^{2}>0$. Therefore,
\begin{equation}
\theta^{\dagger}=\frac{(\rho^{2}\gamma-\rho\nu(1-\lambda))+\sqrt{(\rho^{2}\gamma-\rho\nu(1-\lambda))^{2}
+4\rho^{2}(\rho\nu\gamma(1-\lambda)+\lambda\nu^{2})}}{2\rho^{2}}.
\end{equation}
\end{example}

\begin{example}\label{EX2}
Now, let $H(a)$ be exponentially distributed with parameter $\lambda>0$. We want
an explicit expression for the rate function of the large deviation principle for $(N_{t}/t\in\cdot)$.
Notice that,
\begin{equation}
\Gamma(\theta)
=
\begin{cases}
\nu\left(\frac{1}{2}\left\{\lambda+1-\sqrt{(\lambda+1)^{2}-4\lambda e^{\theta}}\right\}-1\right)
&\text{for $\theta\leq\log\left(\frac{(\lambda+1)^{2}}{4\lambda}\right)$}
\\
+\infty &\text{otherwise}
\end{cases}.
\end{equation}
To get $I(x)=\sup_{\theta\in\mathbb{R}}\{\theta x-\Gamma(\theta)\}$, we optimize over $\theta$
and consider $x=\Gamma'(\theta)$. Evidently,
\begin{equation}
x+\frac{1}{2}\nu(-4\lambda)e^{\theta}\frac{1}{2\sqrt{(\lambda+1)^{2}-4\lambda e^{\theta}}}=0,
\end{equation}
which gives us
\begin{equation}
\theta=\log\left(\frac{-2x^{2}+x\sqrt{4x^{2}+\nu^{2}(\lambda+1)^{2}}}{\lambda\nu^{2}}\right),
\end{equation}
whence,
\begin{equation}
I(x)
=
\begin{cases}
x\log\left(\frac{-2x^{2}+x\sqrt{4x^{2}+\nu^{2}(\lambda+1)^{2}}}{\lambda\nu^{2}}\right)
\\
\qquad\qquad\qquad
-\nu\left(\frac{1}{2}\left\{\lambda+1-\frac{-2x+\sqrt{4x^{2}+\nu^{2}(\lambda+1)^{2}}}{\nu}\right\}-1\right)
&\text{if $x\geq 0$}
\\
+\infty &\text{otherwise}
\end{cases}.
\end{equation}
\end{example}

\section*{Acknowledgements}

The authors are both supported by NSF grant DMS-0904701, DARPA grant and MacCracken Fellowship at New York University.
The authors are grateful to the editor and the referees for a very careful reading of the manuscript
and also for the helpful suggestions.


\begin{thebibliography}{21}

\bibitem{Bacry} Bacry, E., Delattre, S., Hoffmann, M., and J. F. Muzy. (2013).
Some limit theorems for Hawkes processes and application to financial statistics.
\textit{Stochastic Processes and their Applications}.
\textbf{123}, 2475-2499.

\bibitem{Bordenave} Bordenave, C. and G. L. Torrisi. (2007). 
Large deviations of Poisson cluster processes. 
\textit{Stochastic Models}.
\textbf{23}, 593-625.

\bibitem{Bremaud} Br\'{e}maud, P., and L. Massouli\'{e}. (1996). 
Stability of nonlinear Hawkes processes.
\textit{Annals of Probability}. 
\textbf{24}, 1563-1588.

\bibitem{BremaudII} Br\'{e}maud, P., Nappo, G. and G. L. Torrisi. (2002).
Rate of convergence to equilibrium of marked Hawkes processes.
\textit{J. Appl. Prob.} 
\textbf{39}, 123-136.

\bibitem{Daley} Daley, D. J. and D. Vere-Jones, 
\emph{An Introduction to the Theory of Point Processes}, Volume I and II, Springer, Second Edition, 2003

\bibitem{Dembo} Dembo, A. and O. Zeitouni, \emph{Large Deviations Techniques and Applications}, 2nd Edition, Springer, 1998

\bibitem{Glynn} Glynn, P. W. and W. Whitt. (1994). 
Logarithmic asymptotics for steady-state tail probabilities in a single-server queue. 
\textit{J. Appl. Probab.} 
\textbf{31A}, 131-156.

\bibitem{Goldie}
Goldie, C. M. and S. Resnick. (1988). 
Distributions that are both subexponential and in the domain
of attraction of an extreme value distribution. 
\textit{Adv. Appl. Probab}. \textbf{20}, 706-718.

\bibitem{HawkesII} Hawkes, A. G. and D. Oakes. (1974). 
A cluster process representation of a self-exciting process. 
\textit{J. Appl. Prob.} 
\textbf{11}, 493-503.

\bibitem{Jacod}
Jacod, J. and A. N. Shiryaev. 
\textit{Limit Theorems for Stochastic Processes}.
Springer-Verlag, Berlin, 1987.

\bibitem{Karabash} Karabash, D. 
On stability of Hawkes process.
\textit{arXiv:1201.1573}.

\bibitem{Koralov}
Koralov, L. B. and Ya. G. Sinai.
\textit{Theory of Probability and Random Processes}.
Springer, 2nd edition, 2012.

\bibitem{Stabile} Stabile, G. and G. L. Torrisi. (2010). 
Risk processes with non-stationary Hawkes arrivals. 
\textit{Methodol. Comput. Appl. Prob.} 
\textbf{12}, 415-429.

\bibitem{Varadhan} Varadhan, S. R. S. (2008). 
Large deviations. 
\textit{Annals of Probability}. 
\textbf{36}, 397-419.

\bibitem{VaradhanII} Varadhan, S. R. S., \emph{Large Deviations and Applications}, SIAM, 1984

\bibitem{ZhuI} Zhu, L. (2015).
Large deviations for Markovian nonlinear Hawkes processes. 
\textit{Annals of Applied Probability}.
\textbf{25}, 548-581.

\bibitem{ZhuII} Zhu, L. (2014).
Process-level large deviations for nonlinear Hawkes point processes.
\textit{Annales de l'Institut Henri Poincar\'{e}-Probabilit\'{e}s et Statistiques}.
\textbf{50}, 845-871.

\bibitem{ZhuIII} Zhu, L. (2013). 
Central limit theorem for nonlinear Hawkes processes.
\textit{Journal of Applied Probability}.
\textbf{50}, 760-771.

\bibitem{ZhuIV} Zhu, L. (2013).
Moderate deviations for Hawkes processes.
\textit{Statistics \& Probability Letters}.
\textbf{83}, 885-890.

\bibitem{ZhuV} Zhu, L. (2014).
Limit theorems for a Cox-Ingersoll-Ross process with Hawkes jumps.
\textit{Journal of Applied Probability}.
\textbf{51}, 699-712.

\bibitem{ZhuVI} Zhu, L. (2013).
Ruin probabilities for risk processes with non-stationary arrivals and subexponential claims.
\textit{Insurance: Mathematics and Economics}.
\textbf{53}, 544-550.
\end{thebibliography}
\end{document}